\newtheorem{thm}{Theorem}[section]
\newtheorem{cor}[thm]{Corollary}
\newtheorem{prop}[thm]{Proposition}
\newtheorem{defn}[thm]{Definition}
\newtheorem{rem}[thm]{Remark}
\newtheorem{Rem}[thm]{Remark}
\numberwithin{equation}{section}
\DeclareMathOperator{\Ann}{Ann}
\begin{document}
\title[classification of solvable Leibniz algebras with null-filiform nilradical] {classification of solvable Leibniz algebras with null-filiform nilradical}
\author{J. M. Casas, M. Ladra, B. A. Omirov and I.A. Karimjanov}
\address{[J.M. Casas] Department of Applied Mathematics, University of Vigo, E.U.I.T. Forestal, Pontevedra 36005, Spain} \email{jmcasas@uvigo.es}
\address{[M. Ladra] Department of Algebra, University of Santiago de Compostela, Santiago de Compostela 15782, Spain} \email{manuel.ladra@usc.es}
\address{[B.A. Omirov --- I.A. Karimdjanov] Institute of Mathematics and Information Technologies  of Academy of Uzbekistan, 29, Do'rmon yo'li street., 100125, Tashkent (Uzbekistan)} \email{omirovb@mail.ru --- iqboli@gmail.com}

\subjclass[2010]{17A32, 17A65, 17B30}

\keywords{Leibniz algebra, null-filiform algebra, solvability, nilpotence, nilradical}

\begin{abstract}
In this paper we classify solvable Leibniz algebras whose nilradical is a null-filiform algebra. We extend the obtained classification to the case when the solvable Leibniz algebra is decomposed as a direct sum of its nilradical, which is a direct sum of null-filiform ideals, and a one-dimensional complementary subspace. Moreover, in this case  we establish that these ideals are ideals of the algebra, as well.
\end{abstract}

\maketitle
\section{Introduction}

The notion of Leibniz algebra has been firstly introduced by Loday
in \cite{Lod} as a non-antisymmetric generalization of Lie
algebras. During the last $20$ years the theory of Leibniz algebras has
been actively studied and many results of the theory of Lie
algebras have been extended to Leibniz algebras. For instance, the classical results on Cartan subalgebras, regular elements and others from the theory of Lie algebras are also true for Leibniz algebras \cite{AlAyBa,Omi}.

From classical theory of finite-dimensional Lie algebras it is known that an arbitrary Lie algebra is decomposed into a semidirect sum of the solvable radical and a semisimple subalgebra (Levi's theorem). In addition, the semisimple part is a direct sum of simple ideals, which is completely classified \cite{Jac}. Thanks to Mal'cev's results \cite{Mal} the study of solvable Lie algebras is reduced to the study of nilpotent ones. Thus, the description of finite-dimensional Lie algebras is reduced to the description of nilpotent algebras.

In the case of Leibniz algebras, the analogue of Levi's theorem  was proven in \cite{Bar}. Namely, a Leibniz algebra is decomposed into a semidirect sum of the  solvable radical and a semisimple Lie algebra.
Thus, the semisimple part can be described from simple Lie ideals and the main problem is to study the solvable radical, i.e. the description of solvable Leibniz algebras. The inherent properties of non-Lie Leibniz algebras imply that the subspace spanned by squares of elements of the algebra is a non-trivial  ideal (moreover, this ideal is abelian). In fact, this ideal is the minimal one such that the quotient algebra is a Lie algebra. Thus, we also reduce our study of Leibniz algebras to the solvable ones.

The investigation of solvable Lie algebras with some special types of nilradical comes from different problems in Physics and was the subject of various papers \cite{AnCaGa1,AnCaGa2,BoPaPo,Cam,NdWi,SnKa,SnWi,TrWi,WaLiDe} and many other references given there.
Also it is natural to add  restrictions to the index of nilpotency and graduation on the nilradical. For example,  the cases where the nilradical of a solvable Lie algebra is filiform, quasi-filiform and abelian  were considered. We recall that the maximal index of nilpotency of an $n$-dimensional Lie algebra is $n$ (such algebras were called  filiform in  \cite{Ver}).
However,  the maximal index of nilpotency of an $n$-dimensional Leibniz algebra is equal to $n+1$ (such algebras  were called  null-filiform in \cite{AyOm2}).

Our goal in the present paper is to classify solvable Leibniz algebras with null-filiform nilradical. Moreover, this classification is extended to the case when the nilradical is a direct sum of null-filiform ideals and the complementary vector space of the nilradical is one-dimensional.



In order to achieve our goal, the paper is organized as follows. In Section~\ref{S:2} we recall some needed notions and properties of Leibniz algebras. We start  Section~\ref{S:3} establishing that the dimension of a solvable Leibniz algebra whose nilradical is an $n$-dimensional null-filiform Leibniz algebra is exactly $n+1$; after that,  we present  our main results: the classification of solvable Leibniz algebras that  can be decomposed as a direct sum of their nilradical and a complementary vector space, where the nilradical is a direct sum of null-filiform Leibniz algebras. Firstly we study the case when the solvable Leibniz algebra is a direct sum of its nilradical and a one-dimensional complementary vector space, where the nilradical is null-filiform; after that we consider the case where the nilradical decomposes in a direct sum of two null-filiform ideals. Finally we consider the general situation where the nilradical decomposes as a direct sum of any null-filiform ideals.

Throughout the paper we consider finite-dimensional vector spaces and algebras over a field of characteristic zero. Moreover, in the table of  multiplication  of an algebra omitted products are assumed to be zero and if it is not noticed we shall consider non-nilpotent solvable algebras.

\section{Preliminaries} \label{S:2}
In this section we give necessary definitions and preliminary results.

\begin{defn} An algebra $(L,[-, -])$ over a field $F$ is said to be a Leibniz algebra if for any $x,y,z\in L$ the so-called Leibniz identity
\[ \bigl[x,[y,z]\bigr]=\bigl[[x,y],z\bigr] - \bigl[[x,z],y\bigr] \] holds.
\end{defn}

A subalgebra $H$ of a Leibniz algebra $L$ is said to
be a \emph{two-sided ideal} if $[L,H] \subseteq L$ and $[H,L] \subseteq L$.
 Let $H$ and $K$ be two-sided ideals of a Leibniz algebra $L$. The
\emph{commutator ideal} of $H$ and $K$, denoted by
$[H,K]$ is the two-sided ideal of
$L$ spanned by the brackets $[h, k], [k, h], h \in H, K \in K$. Obviously
$[H,K] \subset H \cap K$.

From the Leibniz identity we conclude that the elements of the form $[x,x]$ and $[x,y]+[y,x]$, for any $x, y$, lie in the  \emph{right annihilator} $\Ann_r(L) = \{ x \in L: [y,x]=0 \ {\rm for\ all\ } y \in L \}$ of the Leibniz algebra. Moreover, we also get that $\Ann_r(L)$ is a two-sided ideal of the Leibniz algebra.

For a given Leibniz algebra $L$, we define the \emph{lower central} and  \emph{derived series} to the sequences of two-sided ideals defined recursively as follows:
\[L^1=L, \ L^{k+1}=[L^k,L],   \ \ k \geq 1; \qquad\qquad
L^{[1]}=L, \ L^{[s+1]}=[L^{[s]},L^{[s]}],  \ \ s \geq 1.
\]
\begin{defn} A Leibniz algebra $L$ is said to be
nilpotent (respectively, solvable), if there exists $n\in\mathbb N$ ($m\in\mathbb N$) such that $L^{n}=0$ (respectively, $L^{[m]}=0$).
The minimal number $n$ (respectively, $m$) with such property is said to be the index of
nilpotency (respectively, of solvability) of the algebra $L$.
\end{defn}

\begin{Rem}
Obviously, the index of nilpotency of an $n$-dimensional nilpotent Leibniz algebra is not greater than $n+1$.
\end{Rem}

\begin{defn} An $n$-dimensional Leibniz algebra is said to be null-filiform if $\dim L^i=n+1-i, \ 1\leq i \leq n+1$.
\end{defn}

\begin{Rem}
Obviously, a null-filiform Leibniz algebra has maximal index of nilpotency.
\end{Rem}

\begin{thm}[\cite{AyOm2}] \label{t21} An arbitrary $n$-dimensional null-filiform Leibniz algebra is isomorphic to the algebra:
\[NF_n: \quad [e_i, e_1]=e_{i+1}, \ 1 \leq i \leq n-1,\]
where $\{e_1, e_2, \dots, e_n\}$ is a basis of the algebra $NF_n$.
\end{thm}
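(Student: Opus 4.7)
My plan is to extract the basis $\{e_1, \dots, e_n\}$ directly from the descending flag
$L = L^1 \supset L^2 \supset \dots \supset L^n \supset L^{n+1} = 0$,
whose consecutive quotients are one-dimensional by the null-filiform hypothesis. First I would pick any $e_1 \in L \setminus L^2$ (possible since $\dim L/L^2 = 1$) and define $e_{i+1} := [e_i, e_1]$ recursively. From $L^{k+1} = [L^k, L]$ a trivial induction then gives $e_i \in L^i$.

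Before showing these vectors form a basis, I would prove the auxiliary inclusion $[L^a, L^b] \subseteq L^{a+b}$ by induction on $b$, using the Leibniz identity $[x,[y,z]] = [[x,y],z] - [[x,z],y]$ to reduce the case $b+1$ to the cases $b$ and $1$. With this in hand I would show, by induction on $i$, that $e_i$ represents a nonzero class in $L^i/L^{i+1}$. Given the statement for $i$, any generator $[x,y] \in L^{i+1}$ with $x \in L^i$, $y \in L$ can be decomposed as $x = \alpha e_i + x'$ with $x' \in L^{i+1}$, and $y = \beta e_1 + y'$ with $y' \in L^2$. The auxiliary inclusion then places three of the four resulting brackets inside $L^{i+2}$, leaving
$[x,y] \equiv \alpha\beta \, e_{i+1} \pmod{L^{i+2}}$.
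Since $L^{i+1}/L^{i+2}$ is one-dimensional and nonzero, $e_{i+1}$ must span it; in particular $e_{i+1} \notin L^{i+2}$. Consequently the $e_i$ represent nonzero classes in distinct one-dimensional quotients, so $\{e_1, \dots, e_n\}$ is a basis of $L$.

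It then remains to read off the multiplication table. For $i < n$, the relation $[e_i, e_1] = e_{i+1}$ holds by construction, and $[e_n, e_1] \in L^{n+1} = 0$. For $j \geq 2$, applying the Leibniz identity yields
$[e_i, e_j] = [e_i,[e_{j-1}, e_1]] = [[e_i, e_{j-1}], e_1] - [e_{i+1}, e_{j-1}]$,
and a short induction on $j$, starting from $[e_i, e_2] = [e_i,[e_1,e_1]] = [[e_i,e_1],e_1] - [[e_i,e_1],e_1] = 0$, shows all such products vanish. This exactly matches the table of $NF_n$.

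The main obstacle is the middle paragraph: proving $e_{i+1} \notin L^{i+2}$ requires genuine use of the Leibniz identity through the filtration bound $[L^a, L^b] \subseteq L^{a+b}$, without which the mod-$L^{i+2}$ calculation collapses. Once the flag is known to behave multiplicatively, linear independence and the vanishing of the off-diagonal brackets are formal.
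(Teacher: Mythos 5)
Your proof is correct. Note that the paper does not actually prove this statement --- it imports Theorem~\ref{t21} from the reference \cite{AyOm2} without argument --- so there is no internal proof to compare against; your write-up is a valid self-contained derivation along the standard lines of the original source: the one-dimensional quotients $L^i/L^{i+1}$ force the algebra to be generated by a single element $e_1$, the lemma $[L^a,L^b]\subseteq L^{a+b}$ (whose induction via the Leibniz identity you set up correctly) shows that the iterated right multiplications $e_{i+1}=[e_i,e_1]$ descend strictly through the flag and hence form a basis, and the remaining products $[e_i,e_j]$, $j\geq 2$, vanish by the Leibniz identity. The only point worth making explicit in a final version is the base computation $[e_i,e_2]=[e_i,[e_1,e_1]]=0$ and the convention $e_{n+1}=0$, both of which you already have.
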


From this theorem it is easy to see that a nilpotent Leibniz algebra is null-filiform if and only if it is a one-generated algebra. Note that this notion has no sense in Lie algebras case, because they are at least two-generated.

\begin{defn} The  maximal nilpotent ideal of a Leibniz algebra is said to be the nilradical of the algebra.
\end{defn}

\begin{defn}
For a Leibniz algebra $L$, a linear map $d \colon  L \to L$ is said to be a derivation if \[d[x,y]=[d(x),y]+[x,d(y)]\] for all $x, y \in L$.
\end{defn}

For a fixed $x \in L$, the map $R_x \colon  L \to L, R(x)(y)=[y,x]$ is a derivation. We call this kind of derivations as  \emph{inner derivations}. Derivations that are not inner are said to be  \emph{outer derivations}.

\begin{defn}[\cite{Mub}]
Let $d_1, d_2, \dots, d_n$ be derivations of a Leibniz algebra $L$. The  derivations $d_1, d_2, \dots, d_n$ are said to be nil-independent if  \[\alpha_1 d_1 + \alpha_2 d_2 + \dots + \alpha_n d_n\]
is not nilpotent for any scalars $\alpha_1, \alpha_2, \dots, \alpha_n \in F$.

In other words, if for any $\alpha_1, \alpha_2, \dots, \alpha_n \in F$ there exists a natural number $k$ such that  $\left( \alpha_1 d_1 + \alpha_2 d_2 + \dots + \alpha_n d_2 \right)^k = 0$, then  $\alpha_1 = \alpha_2 = \dots = \alpha_n = 0$.
\end{defn}

\section{Main results}\label{S:3}

Let $R$ be a solvable Leibniz algebra. Then it can be decomposed in the form $R=N \oplus Q$, where $N$ is the  nilradical and $Q$ is the complementary vector space. Since the square of a solvable algebra is a nilpotent ideal and the finite sum of nilpotent ideals is a nilpotent ideal too \cite{AyOm1}, then we get the nilpotency of the ideal $R^2$, i.e. $R^2\subseteq N$ and consequently, $Q^2\subseteq N$.

\begin{thm} \label{t31} Let $R$ be a solvable Leibniz algebra and $N$ its nilradical. Then the dimension of the  complementary vector  space to $N$ is not greater than the  maximal number of nil-independent derivations of $N$.
\end{thm}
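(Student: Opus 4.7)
The plan is to attach to each element of $Q$ a derivation of $N$ via right multiplication, and to establish nil-independence of these derivations by contradicting the maximality of the nilradical.

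Choose a basis $\{x_1,\dots,x_k\}$ of $Q$, so $k=\dim Q$. By the Leibniz identity each right multiplication $R_{x_i}\colon R\to R$ is a derivation of $R$, and since $N$ is a two-sided ideal it is preserved by $R_{x_i}$; hence $d_i:=R_{x_i}|_N$ is a derivation of $N$. To prove nil-independence, suppose for contradiction that $d:=\alpha_1 d_1+\cdots+\alpha_k d_k$ is nilpotent for some nonzero tuple $(\alpha_1,\dots,\alpha_k)\in F^k$, and put $y:=\alpha_1 x_1+\cdots+\alpha_k x_k\in Q\setminus\{0\}$. The strategy is to exhibit $N':=N+Fy$ as a nilpotent two-sided ideal of $R$ strictly containing $N$, contradicting maximality and forcing all $\alpha_i$ to vanish. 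The ideal property follows from $N$ being a two-sided ideal together with the already-noted inclusion $Q^2\subseteq R^2\subseteq N$: one obtains $[R,y]\subseteq[N,y]+[Q,y]\subseteq N$, and $[y,R]\subseteq N$ analogously, so $[R,N']$ and $[N',R]$ both lie in $N\subseteq N'$.

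The principal obstacle is proving that $N'$ is nilpotent. Since $[Q,Q]\subseteq N$ one has $(N')^2\subseteq N$, and for $m\ge 2$ the recursion
\[
(N')^{m+1}=[(N')^m,N]+d\bigl((N')^m\bigr)
\]
holds, because right multiplication by $y$ restricted to $N$ equals $d$. Iterating expresses each element of $(N')^{m+1}$ as a sum of compositions of right multiplications by elements of $N$ and copies of the nilpotent derivation $d$, applied to an element of $N'$. A filtration argument — using that any such composition eventually vanishes, either by accumulating enough right multiplications inside $N$ to reach $N^p=0$ (where $p$ is the nilpotency index of $N$) or enough applications of $d$ to hit $d^s=0$ (where $s$ is the nilpotency index of $d$) — yields $(N')^K=0$ for sufficiently large $K$. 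This is the Leibniz analogue of the standard Lie-theoretic fact that adjoining an element acting nilpotently on a nilpotent ideal produces a nilpotent extension; the main technical difficulty is balancing the $N$-filtration against the $d$-filtration in the iterated brackets. Once $N'$ is nilpotent and strictly extends $N$, the contradiction with the maximality of the nilradical completes the argument.
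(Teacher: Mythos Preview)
Your overall strategy matches the paper's: assign to each basis element $x_i \in Q$ the derivation $d_i = R_{x_i}|_N$, and prove nil-independence by showing that if some nontrivial combination $d = R_y|_N$ were nilpotent then $N' = N + Fy$ would be a nilpotent ideal strictly containing $N$, contradicting maximality. The ideal step is argued the same way in both. The difference lies in how nilpotency of $N'$ is established. The paper shows that each operator $R_a + R_y$ (for $a \in N$) is nilpotent on $N$ --- invoking, however, a binomial expansion $(R_a + R_y)^n = \sum_i \binom{n}{i} R_a^i R_y^{n-i}$ that presupposes commutativity --- and then appeals to Engel's theorem for Leibniz algebras. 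You instead attack the lower central series of $N'$ directly via the recursion $(N')^{m+1} = [(N')^m, N] + d\bigl((N')^m\bigr)$, which sidesteps that commutativity issue.

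Your filtration sketch needs one sharpening, though. The dichotomy ``either enough right multiplications to reach $N^p = 0$ or enough applications of $d$ to hit $d^s = 0$'' is not literally correct: $s$ \emph{scattered} copies of $d$ inside a word $T_L \cdots T_1$ need not annihilate anything. The point to make explicit is that $d$, being a derivation of $N$, preserves every term $N^i$ of the lower central series; hence each factor $R_n$ strictly raises the filtration index while each $d$ preserves it. Thus a word containing at least $p-1$ factors of type $R_n$ already maps $N$ into $N^p = 0$, whereas a word with at most $p-2$ such factors has its $d$'s partitioned into at most $p-1$ consecutive blocks, so length $\geq (p-1)s$ forces some block to contain $s$ consecutive $d$'s and therefore vanish. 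With this filled in, $(N')^{(p-1)s+2} = 0$ and your argument closes.
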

\begin{proof}
Let us consider the restrictions of the right multiplication operators on an element $x \in Q$ (denoted by $R_{{x  \mid}_{N}}$).
 We assert that every $R_{{x \mid}_{N}}$  is a non-nilpotent derivation of $N$.  Indeed, if there exists some $x \in Q$ such that the operator $R_{{x \mid}_{N}}$ is nilpotent, then the subspace $\langle  x + N  \rangle$ is a nilpotent ideal of the algebra $R$.  $\langle  x + N  \rangle$ is an ideal because of $R^2 \subseteq N$; its nilpotency is argued as follows:  if $a \in N$, then $R_a$ is a nilpotent operator thanks to Engel's theorem for Leibniz algebras \cite{AyOm1}; since $R_{{x  \mid}_{N}}$ is a nilpotent operator by assumption, then $R_a + R_{{x  \mid}_{N}}$ is a nilpotent operator because of the formula
  \[(R_a + R_{{x  \mid}_{N}})^n = \sum_{i=0}^n \dbinom{n}{i}   R_a^i R_{{x  \mid}_{N}}^{n-i}\]
If $s$ and $t$ are the indices of nilpotency of $R_a$ and $R_{{x  \mid}_{N}}$, respectively, then it is enough to take $n=s+t$. Consequently,  $\langle  x + N  \rangle$  is a nilpotent ideal, which is  in contradiction with the maximality condition of $N$.

Thus, we obtain that for any $x$ from $Q$, the operator $R_{{x  \mid}_{N}}$ is a non-nilpotent outer derivation of $N$.

Let us assume that  $\{x_1, \dots, x_m\}$ is a basis of $Q$. Then the operators $R_{{x_1  \mid}_{N}}, \dots,R_{{x_m \mid}_{N}}$ are nil-independent, since if for some scalars $\{\alpha_1, \dots, \alpha_m\}\in F\setminus \{0\}$ we have that
$\Bigl( \displaystyle \sum_{i=1}^m \alpha_iR_{{x_i  \mid}_{N}} \Bigr)^k =0$, then  $\Bigl( R_{\Bigl( \displaystyle \sum_{i=1}^m \alpha_i x_i \Bigr)\!\!\bigm|_{N}} \Bigr)^k =0$, i.e. for the element  $y=\displaystyle \sum_{i=1}^m \alpha_i x_i$, the operator $R_{{y \mid}_{N}}$ is nilpotent, then $y=0$, and so  $\alpha_i =0$ for $i=1, \dots, m$.

Therefore, we have that dimension of $Q$ is bounded by the maximal number of nil-independent derivations of the nilradical $N$. Moreover, similarly to the case of Lie algebras, for solvable Leibniz algebra $R$ we also have inequality $\dim N \geq \dfrac{\dim R}{2}$.
\end{proof}

From Theorem~\ref{t31}, they can be derived the following consequences about the derivations of null-filiform Leibniz algebras.

\begin{prop} Any derivation of the algebra $NF_n$ has the following matrix form:
\[
\begin{pmatrix}
a_1& a_2&a_3&\ldots&a_n\\
0& 2a_1&a_2&\ldots& a_{n-1}\\
0& 0&3a_1&\ldots& a_{n-2}\\
\vdots&\vdots&\vdots&\vdots&\vdots\\
0&0&0&\ldots&na_1
\end{pmatrix}.
\]
\end{prop}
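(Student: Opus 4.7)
The plan is to exploit the fact that $NF_n$ is generated as an algebra by the single element $e_1$, since $e_{i+1} = [e_i, e_1]$ for $1 \le i \le n-1$; hence any derivation $d$ is completely determined by the value $d(e_1)$. I would begin by introducing the scalars $a_1, \dots, a_n$ via
\[d(e_1) = a_1 e_1 + a_2 e_2 + \dots + a_n e_n,\]
and then show by induction on $i$ that
\[d(e_i) = i a_1 e_i + a_2 e_{i+1} + a_3 e_{i+2} + \dots + a_{n-i+1} e_n,\]
which is exactly row $i$ of the claimed matrix.

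The inductive step is driven by the Leibniz identity applied to $e_{i+1} = [e_i, e_1]$:
\[d(e_{i+1}) = [d(e_i), e_1] + [e_i, d(e_1)].\]
Substituting the inductive expression for $d(e_i)$ into the first bracket shifts each basis index upward by one, using $[e_k, e_1] = e_{k+1}$ for $k < n$ and $[e_n, e_1] = 0$; this yields $i a_1 e_{i+1} + a_2 e_{i+2} + \dots + a_{n-i} e_n$. The second bracket collapses to just $a_1 e_{i+1}$, because $[e_i, e_j] = 0$ for every $j \ge 2$, so only the $a_1 e_1$ summand of $d(e_1)$ survives. Adding the two contributions promotes the diagonal coefficient from $i a_1$ to $(i+1) a_1$ and shifts the off-diagonal $a_k$'s by one step, matching the desired row $i+1$.

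I do not anticipate a serious obstacle; the recursion is one-directional, as $d(e_i)$ determines $d(e_{i+1})$ and never circles back to re-constrain the $a_k$'s. The one subtlety worth checking is that applying the Leibniz rule to the remaining zero products---$[e_n, e_1] = 0$ and $[e_i, e_j] = 0$ for $j \ge 2$---imposes no additional relations on the scalars. Both are automatic: every $d(e_k)$ lies in the span of $\{e_k, e_{k+1}, \dots, e_n\}$, and the only non-zero products in $NF_n$ have left argument in $\{e_1, \dots, e_{n-1}\}$ and right argument $e_1$, so both sides of each such identity vanish independently.
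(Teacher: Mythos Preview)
Your proposal is correct and is precisely the natural way to ``check the derivation property on $NF_n$,'' which is all the paper says for its proof. You have simply supplied the details the paper omits: the one-generated structure forces $d$ to be determined by $d(e_1)$, the inductive computation of $d(e_i)$ via $d([e_i,e_1])=[d(e_i),e_1]+[e_i,d(e_1)]$ yields the claimed rows, and the vanishing products impose no further constraints.
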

\begin{proof}
The proof is carried out by checking the derivation property on algebra $NF_n$.
\end{proof}

\begin{cor} \label{cor}
The  maximal number of nil-independent derivations of the  $n$-dimensional null-filiform Leibniz algebra $NF_n$ is 1.
\end{cor}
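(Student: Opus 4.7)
The plan is to read off the claim directly from the matrix description of derivations given in the preceding proposition. A derivation $d$ of $NF_n$ is upper-triangular with diagonal entries $a_1, 2a_1, 3a_1, \dots, na_1$, so the eigenvalues of $d$ are exactly $ka_1$ for $k=1,\dots,n$. Hence $d$ is nilpotent if and only if $a_1 = 0$.

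Given this, I would first exhibit one non-nilpotent derivation to show that the maximal number of nil-independent derivations is at least $1$: simply take the derivation with $a_1 = 1$ and all other $a_i = 0$ (the diagonal derivation with eigenvalues $1,2,\dots,n$), which is patently non-nilpotent.

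Next, to show that no two derivations $d$ and $d'$ can be nil-independent, I would look at their $a_1$-coefficients, call them $\lambda$ and $\lambda'$. For any scalars $\alpha, \beta$, the linear combination $\alpha d + \beta d'$ is again an upper-triangular matrix of the same shape, with $a_1$-coefficient equal to $\alpha \lambda + \beta \lambda'$. Since this is a single linear equation in two unknowns, there exist $(\alpha,\beta) \neq (0,0)$ making $\alpha \lambda + \beta \lambda' = 0$, and for such a choice the combination has all diagonal entries zero, hence is nilpotent. Therefore $d$ and $d'$ are not nil-independent, and the maximal number of nil-independent derivations of $NF_n$ is exactly $1$.

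There is no real obstacle here; the proposition has already done the heavy lifting by pinning down the form of derivations. The only thing to be careful about is phrasing the nil-independence argument correctly: nil-independence fails as soon as one produces a single nontrivial vanishing combination of $a_1$-coefficients, and the triangularity guarantees that vanishing of the diagonal implies nilpotency of the whole operator.
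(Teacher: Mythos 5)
Your proof is correct and follows essentially the same route as the paper: use the triangular matrix form from the preceding proposition, note that nilpotency is equivalent to the vanishing of $a_1$, and kill the $a_1$-coefficients of any two derivations by a nontrivial linear combination to get a strictly upper-triangular, hence nilpotent, operator. If anything, your version is slightly more complete, since you also exhibit a non-nilpotent derivation for the lower bound and your linear-algebra formulation avoids the implicit division by $a_1^1$ in the paper's combination $D_i - \frac{a_1^i}{a_1^1}D_1$.
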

\begin{proof}
Let
\[ D_i =
\begin{pmatrix}
a_1^i& a_2^i&a_3^i&\ldots&a_n^i\\
0& 2a_1^i&a_2^i&\ldots& a_{n-1}^i\\
0& 0&3a_1^i&\ldots& a_{n-2}^i\\
\vdots&\vdots&\vdots&\vdots &\vdots\\
0&0&0&\ldots&na_1^i
\end{pmatrix}, \quad i= 1, 2, \dots, p,
\]
be derivations of $NF_n$. $\{D_1, D_2, \dots, D_p \}$ cannot be nil-independent, since there exists at least the linear combination $\left( D_i - \frac{a_1î}{a_1^1} D_1 \right)^n =0$ with non-trivial scalars, except in case $p=1$.
\end{proof}

\begin{cor} \label{cor34} The dimension of a solvable Leibniz algebra with nilradical $NF_n$ is equal to $n+1$.
\end{cor}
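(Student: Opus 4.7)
The plan is to combine Theorem~\ref{t31} with Corollary~\ref{cor} and then exclude the nilpotent boundary case using the standing assumption that $R$ is non-nilpotent solvable.

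First, I would write $R = N \oplus Q$ with $N = NF_n$ the nilradical and $Q$ a complementary subspace, so that $\dim R = \dim N + \dim Q = n + \dim Q$. Applying Theorem~\ref{t31} directly gives $\dim Q \leq \mu(N)$, where $\mu(N)$ denotes the maximal number of nil-independent derivations of $N$. Corollary~\ref{cor} then yields $\mu(NF_n) = 1$, so that $\dim Q \leq 1$, and therefore $\dim R \leq n+1$.

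Next I would establish the reverse inequality. If $\dim Q = 0$, then $R = N = NF_n$, which is nilpotent; but the convention stated in Section~\ref{S:3} (``if it is not noticed we shall consider non-nilpotent solvable algebras'') rules this out, since a non-nilpotent solvable Leibniz algebra must properly contain its nilradical. Hence $\dim Q \geq 1$, and combined with the previous bound we conclude $\dim Q = 1$ and $\dim R = n+1$.

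The only conceivable obstacle is the lower bound, and it is not really an obstacle: one simply needs to invoke the non-nilpotency convention. Alternatively, one could argue directly that $Q = 0$ would force the whole algebra to coincide with its nilradical, contradicting the distinction between $N$ and $R$ implicit in the hypothesis of the corollary. The upper bound is a one-line composition of the two previously proved results, so essentially all the content has already been done in Theorem~\ref{t31} and Corollary~\ref{cor}.
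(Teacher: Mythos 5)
Your proof is correct and follows the same route as the paper: the upper bound $\dim Q \leq 1$ comes from composing Theorem~\ref{t31} with Corollary~\ref{cor}, and the lower bound $\dim Q \geq 1$ from the standing non-nilpotency convention. The paper's own proof is just the one line ``$1 \leq \dim Q \leq 1$''; you have merely spelled out the lower bound more explicitly, which is a fair reading of what the authors left implicit.
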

\begin{proof}
Let us assume that the solvable Leibniz algebra is decomposed as $R = NF_n \oplus Q$. By Corollary~\ref{cor} and Theorem~\ref{t31} we have $1 \leq \dim Q \leq 1$.
\end{proof}

\begin{thm} \label{thm35} Let $R$ be a solvable Leibniz algebra whose nilradical is $NF_n$. Then there exists a basis $\{e_1, e_2, \dots, e_n, x\}$ of the algebra $R$ such that the multiplication table of $R$ with respect to this basis has the following form:
\[\left\{ \begin{aligned}
{}[e_i,e_1] & =e_{i+1}, && 1\leq i\leq n-1,\\
 [x,e_1]& =e_1, && \\
[e_i,x] & =-ie_i, && 1\leq i\leq n.
\end{aligned}\right.\]
\end{thm}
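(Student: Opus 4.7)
By Corollary~\ref{cor34}, $R = NF_n \oplus \langle x \rangle$ for some element $x$, and the proof of Theorem~\ref{t31} shows that $R_x|_{NF_n}$ is a non-nilpotent derivation of $NF_n$. By the preceding proposition, its matrix is then upper triangular with diagonal $(a_1, 2a_1, \dots, n a_1)$ for some $a_1 \neq 0$. The plan is to rescale $x$ and change basis inside $NF_n$ so as to diagonalize this derivation, and then to pin down the remaining brackets via the Leibniz identity together with a final shift of $x$.

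Replacing $x$ by $-x/a_1$ brings the diagonal of $R_x|_{NF_n}$ to $(-1, -2, \dots, -n)$. These eigenvalues being pairwise distinct, the operator is diagonalizable: let $\tilde e_1$ be the eigenvector for the eigenvalue $-1$, normalized so that its $e_1$-coefficient equals $1$, and set $\tilde e_{i+1} = [\tilde e_i, \tilde e_1]$ inductively. The derivation identity $R_x([\tilde e_i, \tilde e_1]) = [R_x(\tilde e_i), \tilde e_1] + [\tilde e_i, R_x(\tilde e_1)]$ yields $[\tilde e_i, x] = -i \tilde e_i$ for every $i$. The transition $e_i \mapsto \tilde e_i$ is upper triangular with unit diagonal, so $\{\tilde e_1, \dots, \tilde e_n\}$ is a basis of $NF_n$; the products $[\tilde e_i, \tilde e_1] = \tilde e_{i+1}$ hold by construction, and $[\tilde e_i, \tilde e_j] = 0$ for $j \geq 2$ because each such $\tilde e_j$ lies in $\langle e_2, \dots, e_n\rangle = \Ann_r(NF_n)$.

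Next, a simple induction on $i$ using the Leibniz identity $[x, [\tilde e_i, \tilde e_1]] = [[x, \tilde e_i], \tilde e_1] - [[x, \tilde e_1], \tilde e_i]$ gives $[x, \tilde e_i] = 0$ for $i \geq 2$. Writing $[x, \tilde e_1] = \sum_k \alpha_k \tilde e_k$ (no $x$-summand since $R^2 \subseteq NF_n$), the condition $[x, \tilde e_1] + [\tilde e_1, x] \in \Ann_r(R)$ combined with $\tilde e_1 \notin \Ann_r(R)$ forces $\alpha_1 = 1$.

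The main obstacle is to eliminate $\alpha_2, \dots, \alpha_n$ together with $[x, x]$. Writing $[x, x] = \sum_k \delta_k \tilde e_k$, one has $\delta_1 = 0$ from $[x, x] \in \Ann_r(R)$, and expanding $[x, [x, \tilde e_1]]$ directly and via the Leibniz identity yields the recursion $\alpha_k = -\delta_{k-1}/(k-1)$ for $k \geq 2$, so in particular $\alpha_2 = 0$. I would then replace $x$ by $x' = x + \sum_{k=2}^n (\delta_k/k)\,\tilde e_k$: since $[\tilde e_k, x] = -k \tilde e_k$ and $[\tilde e_i, \tilde e_k] = [x, \tilde e_k] = 0$ for $k \geq 2$, a short computation gives $[x', x'] = 0$, while the $\tilde e_k$-contributions introduced into $[x', \tilde e_1]$ telescope against the old $\alpha_k$'s to leave $[x', \tilde e_1] = \tilde e_1$; the relations $[\tilde e_i, x'] = -i \tilde e_i$ and $[x', \tilde e_i] = 0$ for $i \geq 2$ are unaffected. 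Renaming $\tilde e_i \mapsto e_i$ and $x' \mapsto x$ gives the multiplication table claimed in the theorem.
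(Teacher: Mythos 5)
Your argument is correct, and it takes a genuinely different route from the paper's. The paper works entirely at the level of structure constants: it splits on whether the $e_1$-coefficient of $[x,e_1]$ vanishes (showing that the vanishing case forces $R$ to be nilpotent), normalizes $[x,e_1]=e_1$ first, proves $[e_i,x]=-ie_i+\sum_{j\ge i+2}\beta_{j-i+1}e_j$ by induction, and then eliminates the residual parameters $\beta_j$ and $\gamma_n$ through an explicit unitriangular change of basis with recursively defined coefficients $A_i,B_n$, verified by a long computation. You instead start from the operator $R_x\vert_{NF_n}$: the Proposition gives its triangular form with diagonal $(a_1,2a_1,\dots,na_1)$, the non-nilpotency established in the proof of Theorem~\ref{t31} forces $a_1\neq 0$ (which absorbs the paper's Case~2), and the distinctness of the eigenvalues lets you take the generator of $NF_n$ to be the eigenvector for $-1$, so that $[e_i,x]=-ie_i$ is automatic once the basis is rebuilt from that generator; this replaces the paper's heaviest computation by linear algebra. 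Your remaining normalization of $[x,e_1]$ and $[x,x]$ --- the recursion $\alpha_k=-\delta_{k-1}/(k-1)$ obtained from $\bigl[x,[x,\tilde e_1]\bigr]$ and the single shift $x\mapsto x+\sum_{k\ge 2}(\delta_k/k)\tilde e_k$ --- checks out, including the cancellation in $[x',\tilde e_1]$ and the vanishing of $[x',x']$. What the paper's route buys is that its structure-constant induction is reused almost verbatim in Theorem~\ref{thm36} and its successors; what yours buys is brevity and a conceptual explanation of where the eigenvalues $-1,-2,\dots,-n$ come from. One caveat you share with the paper: both arguments implicitly assume $n\ge 2$ (your step ``$\tilde e_1\notin\Ann_r(R)$'' relies on $[\tilde e_1,\tilde e_1]=\tilde e_2\neq 0$, just as the paper's derivation of $\beta_1=-1$ reads off a coefficient of $e_2$), so neither settles the degenerate case $n=1$.
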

\begin{proof}
According to Theorem~\ref{t21} and Corollary~\ref{cor34}  there exists  a basis $\{e_1, e_2, \dots, e_n, x\}$ such that all products of basic elements, except for the products $[e_i,x]$ which can be derived from the equalities $[e_{i+1},x]=\bigl[[e_i,e_1],x \bigr]= \bigl[e_i,[e_1,x] \bigr]+\bigl[[e_i,x],e_1 \bigr]$, $1\leq i\leq n-1$, have the following form:
\[\left\{ \begin{aligned}
{} [e_i,e_1]& =e_{i+1}, && 1\leq i\leq n-1, \\
[x,e_1]& =\sum\limits_{i=1}^n\alpha_ie_i, && \\
 [e_1,x]& =\sum\limits_{i=1}^n\beta_ie_i, &&\\
[x,x]& =\sum\limits_{i=1}^n\gamma_ie_i.&&
\end{aligned}\right.\]
where $\{e_1, e_2, \dots, e_n\}$ is a basis of $NF_n$ and $\{x \}$ is a basis of $Q$.

Now we focus our discussion considering the following two possible cases.

\textbf{Case 1.} Let $\alpha_1\neq 0$. Then taking the change of basis:
\[e_i'=\frac{1}{\alpha_1}\sum\limits_{j=i}^n\alpha_{j-i+1}e_j, \quad
1\leq i\leq n, \qquad x'=\frac{1}{\alpha_1}x\,,\]
we can assume that $[x,e_1]=e_1$ and other products by redesignation of parameters can be assumed not changed.

From the products
\[0=\bigl[x,[x,x]\bigr]=\bigl[x,\sum\limits_{i=1}^n\gamma_ie_i\bigr]=
\sum\limits_{i=1}^n\gamma_i[x,e_i]=\gamma_1e_1,\]
we can deduce that $\gamma_1=0$.

On the other hand, from the  Leibniz identity \[\bigl[x,[e_1,x]\bigr]=\bigl[[x,e_1],x\bigr]-\bigl[[x,x],e_1\bigr]\]
we get $\beta_1[x,e_1]=[e_1,x]-\sum\limits_{i=3}^n \gamma_{i-1}e_i$, i.e.
$\beta_1e_1=\sum\limits_{i=1}^n\beta_ie_i-\sum\limits_{i=3}^n
\gamma_{i-1}e_i$.

Comparing coefficients at the basic elements we obtain $\beta_2=0$ and $\gamma_i=\beta_{i+1}$ for $2\leq i\leq n-1$.

From the equality $\bigl[e_1,[e_1,x]\bigr]=-\bigl[e_1,[x,e_1]\bigr]$, we derive that $\beta_1=-1$.

Thus, we have \[[e_1,x]=-e_1+\sum\limits_{i=3}^n\beta_ie_i, \qquad  \qquad  [x,x]=\sum\limits_{i=2}^{n-1}\beta_{i+1}e_i+\gamma_n e_n.\]
No we are going to prove the following identities by induction:
\begin{equation} \label{E:3.1}
[e_i,x]=-ie_i+\sum\limits_{j=i+2}^n\beta_{j-i+1}e_j, \qquad 1\leq i\leq n.
\end{equation}
For $i=1$ equality \eqref{E:3.1} is true.
Let us assume that equalities \eqref{E:3.1} are true for each $i\ (1 \leq i<n)$.

The following chain of equalities
\begin{align*}
[e_{i+1},x] & =\bigl[[e_i,e_1],x\bigr]=\bigl[e_i,[e_1,x]\bigr]+\bigl[[e_i,x],e_1\bigr]  =[e_i,-e_1]+
\bigl[-ie_i+\sum\limits_{j=i+2}^n\beta_{j-i+1}e_j,e_1\bigr]  \\ &{} =-e_{i+1}-ie_{i+1}+\sum\limits_{j=i+2}^n\beta_{j-i+1}[e_j,e_1]=
-(i+1)e_{i+1}+\sum\limits_{j=i+3}^n\beta_{j-i}e_j
\end{align*}
completes the proof of equalities \eqref{E:3.1} for any $i \ (1\leq i\leq n)$.

Thus, the multiplication table of the algebra $R$ has the form:
\begin{equation} \label{E:3.2}
\left\{\begin{aligned}
 {}[e_i,e_1]& =e_{i+1}, && 1\leq i\leq n-1, \\
 [x,e_1]& =e_1, && \\
 [e_i,x]&=-ie_i+\sum\limits_{j=i+2}^n\beta_{j-i+1}e_j,&& 1\leq i\leq n,\\
 [x,x]&=\sum\limits_{i=2}^{n-1}\beta_{i+1}e_i+\gamma_n e_n.&&
\end{aligned}\right.
\end{equation}
Let us take the change of basis:
\[e_i'=e_i+\sum\limits_{j=i+2}^nA_{j-i+1}e_j, \ 1\leq i\leq n,  \qquad x'=\sum\limits_{i=2}^{n-1}A_{i+1}e_i+B_ne_n+x,\]
where parameters $A_i, B_n$ are as follows
\begin{align*}
A_3=\frac{1}{2}\beta_3, \quad A_4=\frac{1}{3}\beta_4, \quad & A_i=\frac{1}{i-1}\Bigl(\sum\limits_{j=3}^{i-2}A_{i-j+1}\beta_j+\beta_i\Bigr), \quad (5\leq
i\leq n), \\ &  B_n=\frac{1}{n}\Bigl(\sum\limits_{j=3}^{n-1}A_{n-j+2}\beta_j+\gamma_n\Bigr).
\end{align*}
Then taking into account the multiplication table \eqref{E:3.2} we calculate the products in new basis
\begin{align*}
[e_i',e_1']& =\bigl[e_i+\sum\limits_{j=i+2}^nA_{j-i+1}e_j,e_1\bigr]=e_{i+1}+
\sum\limits_{j=i+3}^nA_{j-i}e_j=e_{i+1}', \quad 1\leq i\leq n-1, \\
[x',e_1']& =\bigl[\sum\limits_{i=2}^{n-1}A_{i+1}e_i+B_ne_n+x,e_1\bigr]=
\sum\limits_{i=3}^nA_ie_i+[x,e_1]=e_1+\sum\limits_{i=3}^nA_ie_i=e_1',\\
[x',x']& =\bigl[\sum\limits_{i=2}^{n-1}A_{i+1}e_i+B_ne_n+x,x\bigr]=
\sum\limits_{i=2}^{n-1}A_{i+1}[e_i,x]+B_n[e_n,x]+[x,x]\\
&{}=
\sum\limits_{i=2}^{n-1}A_{i+1}\bigl(-ie_i+\sum\limits_{j=i+2}^n\beta_{j-i+1}e_j\bigr)-nB_ne_n+
\sum\limits_{i=2}^{n-1}\beta_{i+1}e_i+\gamma_n e_n \\ &{}=-\sum\limits_{i=2}^{n-1}iA_{i+1}e_i
+
\sum\limits_{i=2}^{n-3}A_{i+1}\sum\limits_{j=i+2}^{n-1}\beta_{j-i+1}e_j
+\sum\limits_{i=2}^{n-1}\beta_{i+1}e_i\\&{}\qquad  \qquad  \qquad \quad +  \sum\limits_{i=2}^{n-2}A_{i+1}\beta_{n-i+1}e_n-B_ne_n+\gamma_n e_n\\
&{}=
\sum\limits_{i=2}^{n-1}(-iA_{i+1}+\beta_{i+1})e_i+
\sum\limits_{i=4}^{n-1}\sum\limits_{j=3}^{i-1}A_{i-j+2}\beta_je_i+
(-nB_n+\gamma_n+\sum\limits_{i=2}^{n-1}A_{i+1}\beta_{n-i+1})e_n \\
&{}=
(-2A_3+\beta_3) + (-3A_4+\beta_4)+\sum\limits_{i=4}^{n-1}\sum\limits_{j=3}^{i-1}(-iA_{i+1}+\beta_{i+1}+
A_{i-j+2}\beta_j)e_i=0,
\end{align*}
\begin{align*}
[e_1',x']&=\bigl[e_1+\sum\limits_{i=3}^nA_ie_i,x\bigr] =[e_1,x]+
\sum\limits_{i=3}^nA_i[e_i,x]\\
{}&=-e_1+\sum\limits_{i=3}^n\beta_ie_i+
\sum\limits_{i=3}^nA_i\Bigl(-ie_i+\sum\limits_{j=i+2}^n\beta_{j-i+1}e_j\Bigr) \\
&{}=
-e_1+\sum\limits_{i=3}^n\beta_ie_i-\sum\limits_{i=3}^niA_ie_i+
\sum\limits_{i=3}^nA_i\sum\limits_{j=i+2}^n\beta_{j-i+1}e_j \\
&{}=
-e_1-\sum\limits_{i=3}^nA_ie_i-\sum\limits_{i=3}^n(i-1)A_ie_i+
\sum\limits_{i=3}^n\beta_ie_i+\sum\limits_{i=3}^n\Bigl(\sum\limits_{j=3}^{i-2}A_{i-j+1}b_j\Bigr)e_i \\
&{}=-e_1-\sum\limits_{i=3}^nA_ie_i+
\sum\limits_{i=3}^n\bigl(-(i-1)A_i+\beta_i\bigr)e_i+
\sum\limits_{i=5}^n\sum\limits_{j=3}^{i-2}A_{i-j+1}\beta_je_i\\
&{}=
-e_1-\sum\limits_{i=3}^nA_ie_i + (-2A_3+\beta_3)e_3+(-3A_4+\beta_4)e_4\\
&+\sum\limits_{i=5}^n\sum\limits_{j=3}^{i-2}\bigl(-(i-1)A_i+\beta_i+A_{i-j+1}\beta_j\bigr)e_i=
-e_1-\sum\limits_{i=3}^nA_ie_i=-e_1'.
\end{align*}
Following a similar computation as for equations \eqref{E:3.1} we derive that $[e_i',x']=-ie_i', \ 1\leq i\leq n$.

Finally, we obtain the  multiplication table of the algebra $R$ given in the in assertion of the theorem.

\textbf{Case 2.} Let $\alpha_1=0$. Then from equalities $\bigl[e_1,[e_1,x]\bigr]=-\bigl[e_1,[x,e_1]\bigr]$ and
$0=\bigl[x,[x,x]\bigr]$ we get $\beta_1=0$ and $\gamma_1=0$, respectively.

Thus, we have the following products:
\[\left\{\begin{aligned}
 {}[e_i,e_1] & =e_{i+1}, && 1\leq i\leq n-1, \\
 [x,e_1] & =\sum\limits_{i=2}^n\alpha_ie_i, && \\
 [e_1,x]& =\sum\limits_{i=2}^n\beta_ie_i, &&\\
 [x,x]& =\sum\limits_{i=2}^n\gamma_ie_i.&&
\end{aligned}\right.\]
In a similar way as for the equations \eqref{E:3.1} it is proved the equality:
$[e_i,x]=\sum\limits_{j=i+1}^n\beta_{j-i+1}e_j$.

Consequently, we have $[e_i,x]\in \langle \{e_{i+1},e_{i+2},\dots,e_n \} \rangle$, i.e. $R^i\subseteq
\langle \{e_i,e_{i+1},\dots,e_n\} \rangle$. Thus, $R^{n+1}=0$ which is a contradiction with the assumption of  non-nilpotency of the algebra $R$. This implies that, in the case of $\alpha_1=0$, there is a not non-nilpotent solvable Leibniz algebra with nilradical $NF_n$.
\end{proof}

Now we are going to clarify the situation when the nilradical is decomposed into a direct sum of two null-filiform ideals of the nilradical.

\begin{thm} \label{thm36} Let $R$ be a solvable Leibniz algebra such that $R=NF_k\oplus NF_s+Q$, where $NF_k\oplus NF_s$ is the nilradical of $R$, $NF_k$ and $NF_s$ are ideals of the  nilradical and $\dim Q=1$. Then $NF_k$ and $NF_s$ are also ideals of the algebra $R$.
\end{thm}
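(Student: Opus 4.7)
The plan is to show that right and left multiplication by $x$ (which spans $Q$) send each of the summands $NF_k$ and $NF_s$ into itself. Let $\{e_1,\dots,e_k\}$ be the generating basis of $NF_k$ with $[e_i,e_1]=e_{i+1}$ for $1\le i\le k-1$, and analogously $\{f_1,\dots,f_s\}$ for $NF_s$. Since $NF_k$ and $NF_s$ are two-sided ideals of $N$, there are no cross brackets between the $e_i$ and the $f_j$ inside $N$.

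First I would exploit that $R^2\subseteq N$, so that both $L_x|_N$ and $R_x|_N$ land in $N$, and that the Leibniz identity with $x$ in the last slot makes $R_x|_N$ a derivation of $N$. Writing $R_x(e_1)=\sum_i a_ie_i+\sum_j b_jf_j$ and $R_x(f_1)=\sum_i c_ie_i+\sum_j d_jf_j$, the derivation property applied to the vanishing products $[e_1,f_1]=0$ and $[f_1,e_1]=0$ forces $b_1=\cdots=b_{s-1}=0$ and $c_1=\cdots=c_{k-1}=0$. Propagating by induction along the recursions $e_{i+1}=[e_i,e_1]$ and $f_{j+1}=[f_j,f_1]$ yields $R_x(e_i)\in NF_k$ for $i\ge 2$ and $R_x(f_j)\in NF_s$ for $j\ge 2$, so only the ``cross'' coefficients $b_sf_s$ in $R_x(e_1)$ and $c_ke_k$ in $R_x(f_1)$ remain potentially non-zero.

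For the left action, the Leibniz identity $[x,[e_1,e_1]]=[[x,e_1],e_1]-[[x,e_1],e_1]=0$ gives $[x,e_2]=0$; iterating via $[x,e_{i+1}]=[[x,e_i],e_1]-[[x,e_1],e_i]$ and using $[v,e_i]=0$ in $N$ for $i\ge 2$, one shows $[x,e_i]=0$ for all $i\ge 2$ and likewise $[x,f_j]=0$ for $j\ge 2$. Writing $[x,e_1]=\sum\alpha_ie_i+\sum\beta_jf_j$ and $[x,f_1]=\sum\gamma_ie_i+\sum\delta_jf_j$, the identities $[e_2,[x,e_1]]=[[e_2,x],e_1]-[[e_2,e_1],x]$ (and its $f$-analog) give $\alpha_1=-a_1$ and $\delta_1=-d_1$, while $[x,[e_1,f_1]]=[[x,e_1],f_1]-[[x,f_1],e_1]$ forces $\beta_j=0$ for $j\le s-1$ and $\gamma_i=0$ for $i\le k-1$.

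The crucial remaining step is to show the four cross coefficients $b_s,\beta_s,c_k,\gamma_k$ vanish. Applying $[[x,e_1],x]=[x,[e_1,x]]+[[x,x],e_1]$ and reading the $f_s$-component (noting that $[[x,x],e_1]$ lies in $\langle e_3,\dots,e_k\rangle$ and has no $f_s$-component) produces a single linear relation $a_1b_s=-(a_1-sd_1)\beta_s$; the symmetric identity gives $d_1c_k=-(d_1-ka_1)\gamma_k$. To conclude, I would combine these with non-nilpotency of $R_x|_N$ from Theorem~\ref{t31} (so $a_1$ and $d_1$ are not simultaneously zero) and a case analysis separating the generic situation from the ``resonant'' cases $a_1=sd_1$ or $d_1=ka_1$, possibly supplemented by a further Leibniz identity or a harmless re-selection of the generators $e_1,f_1$ inside their respective summands that preserves the ideals $NF_k,NF_s$.

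The main obstacle is precisely this final step: the Leibniz identities alone yield only one linear relation per exceptional pair, which by itself is not enough to force both members of the pair to zero. Closing the argument requires a careful combination of the non-nilpotency hypothesis with the rigid null-filiform structure of each summand, ruling out all possible non-zero solutions of the cross-coefficient system.
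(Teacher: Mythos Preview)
Your setup and the first two stages are fine and match the paper: one quickly gets $\beta_j=0$ for $j\le s-1$, $\gamma_i=0$ for $i\le k-1$ (your notation; the paper's $\beta_i,\delta_i$), and similarly only the ``top'' cross coefficients $b_s,c_k,\beta_s,\gamma_k$ can survive. The gap is exactly where you locate it, and it is not closable along the line you suggest. The Leibniz identities in the original basis will not force these four coefficients to vanish; in fact one can start from the final algebra of Case~2 in the paper and replace $e_1$ by $e_1+f_s$ to obtain a new decomposition $\widetilde{NF_k}\oplus NF_s$ of the nilradical into null-filiform ideals of $N$ in which $[x,\tilde e_1]=\tilde e_1-f_s$ has a genuine $f_s$-component. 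So ``re-selecting $e_1,f_1$ \emph{inside their respective summands}'' cannot help: any basis change that stays inside the given $NF_k,NF_s$ leaves the cross terms intact.

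What the paper actually does is a case split on $(\alpha_1,\gamma_1)$ together with basis changes that are allowed to \emph{mix} the two summands. In Case~2 (and analogously in Case~4) one first replaces $e_1$ by $\alpha_1^{-1}\bigl(\sum_i\alpha_ie_i+\beta_sf_s\bigr)$, which kills the $f_s$-term in $[x,e_1]$; only then does the identity $[x,[x,e_1]]=[[x,x],e_1]-[[x,e_1],x]$ force all $\sigma_i$ (your $b_j$) to vanish. A second change $f_1\mapsto f_1+\frac{\tau_k}{k}e_k$ removes the remaining $e_k$-term in $[f_1,x]$, and a further identity gives $\delta_k=0$. Thus the required inclusions $[x,NF_k]\subseteq NF_k$ etc.\ hold for the \emph{new} summands. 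In other words, the argument produces null-filiform ideals of $R$ summing to $N$, rather than showing that an arbitrary pre-chosen decomposition already has this property. Your plan should therefore replace the attempt to kill the cross coefficients by identities with the paper's case analysis plus these cross basis changes.
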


\begin{proof}
Let  $\{e_1,e_2,\dots,e_k\}$  be a basis of $NF_k$, $\{f_1,f_2,\dots,f_s\}$ a basis of $NF_s$ and $\{x\}$ a basis of $Q$. We can assume, without loss of generality, that $k \geq s$, otherwise we can consider $NF_s \oplus NF_k$.

Then due to Theorem~\ref{t21} we have that $\{e_2, e_3, \dots, e_k, f_2, f_3, \dots, f_s\}\subseteq \Ann_r(R)$ and the following products:
\[ [e_i,e_1]=e_{i+1}, \ \ 1\leq i\leq k-1,  \qquad  [f_i,f_1]=f_{i+1}, \ \ 1\leq i\leq s-1\,.\]

Let us introduce the notations:
\[\left\{ \begin{aligned}
{} [x,e_1]& =\sum\limits_{i=1}^k\alpha_ie_i+\sum\limits_{i=1}^s\beta_if_i, \qquad &
 [x,f_1] & =\sum\limits_{i=1}^k\delta_ie_i+\sum\limits_{i=1}^s\gamma_if_i,\\
 [e_1,x] & =\sum\limits_{i=1}^k\lambda_ie_i+\sum\limits_{i=1}^s\sigma_if_i, \qquad &
 [f_1,x] & =\sum\limits_{i=1}^k\tau_ie_i+\sum\limits_{i=1}^s\mu_if_i,\\
[x,x]& =\sum\limits_{i=1}^k\rho_ie_i+\sum\limits_{i=1}^s\xi_if_i.
\end{aligned} \right. \]
From the products
\[0=\bigl[x,[e_1,f_1]\bigr]=\bigl[[x,e_1],f_1\bigr]-\bigl[[x,f_1],e_1\bigr]=\sum\limits_{i=2}^s\beta_{i-1}f_i-\sum\limits_{i=2}^k\delta_{i-1}e_i.\]
we obtain $\beta_i=0, \ 1\leq i\leq s-1$ and $\delta_i=0, \ 1\leq i\leq k-1$.

The equalities $\bigl[e_1,[e_1,x]\bigr]=-\bigl[e_1,[x,e_1]\bigr]$, $\bigl[f_1,[f_1,x]\bigr]=-\bigl[f_1,[x,f_1]\bigr]$ imply that
$\lambda_1=-\alpha_1$, $ \mu_1=-\gamma_1$.

From equalities $0=\bigl[e_1,[x,x]\bigr]=\rho_1e_2, \ 0=\bigl[f_1,[x,x]\bigr]=\xi_1f_2$ we get $\rho_1=\xi_1=0$.

In a similar way as in the proof of Theorem~\ref{thm35}, the following equalities are proved:
\begin{align*}
[e_i,x]&=-i\alpha_1e_i+\sum\limits_{j=i+1}^k\lambda_{j-i+1}e_j, && \ 2\leq i\leq k,\\
[f_i,x]&=-i\gamma_1f_i+\sum\limits_{j=i+1}^s\mu_{j-i+1}f_j, && \ 2\leq i\leq s.
\end{align*}
Summarizing the above restrictions we obtain the following table of multiplication for the algebra $R$:
\begin{equation} \label{E:3.3}
\left\{ \begin{aligned}
{}[e_i,e_1]&=e_{i+1}, && \ 1\leq i\leq k-1, & [f_i,f_1]&=f_{i+1}, && 1\leq i\leq s-1,\\
 [x,e_1]&=\sum\limits_{i=1}^k\alpha_ie_i+\beta_s f_s,&& & [x,f_1]&=\delta_k e_k+\sum\limits_{i=1}^s\gamma_if_i, &&\\
 [e_1,x]&=-\alpha_1e_1+\sum\limits_{i=2}^k\lambda_ie_i+\sum\limits_{i=1}^s\sigma_if_i,&& & [f_1,x]&=\sum\limits_{i=1}^k\tau_ie_i-\gamma_1f_1+\sum\limits_{i=2}^s\mu_if_i, &&\\
 [e_i,x]&=-i\alpha_1e_i+\sum\limits_{j=i+1}^k\lambda_{j-i+1}e_j, && 2\leq i \leq k,& [f_i,x]&=-i\gamma_1e_i+\sum\limits_{j=i+1}^s\mu_{j-i+1}f_j, && 2\leq i\leq s,\\
 [x,x]&=\sum\limits_{i=2}^k\rho_ie_i+\sum\limits_{i=2}^s\xi_if_i. && &  & &&
\end{aligned} \right.
\end{equation}
Below we analyze the different cases  that can appear in terms of the possible values that the parameters  $\alpha_1, \ \gamma_1$ can achieve.

\textbf{Case 1.} Let $\alpha_1=\gamma_1=0$. Then the table of multiplication \eqref{E:3.3} implies
$[e_i,x]\in \langle \{e_{i+1},e_{i+2},\dots,e_k\} \rangle$,
$[f_i,x]\in \langle \{f_{i+1},f_{i+2},\dots,f_s\} \rangle$,
$[e_1,x]\in \langle \{e_2,e_3,\dots,e_k, f_1, f_2, \dots, f_s\} \rangle$ and
$[f_1,x]\in \langle \{e_1, e_2, \dots, e_k,f_2,f_3,\dots,f_s\} \rangle$. The above facts mean that the algebra $R$ is nilpotent, so we get a contradiction with the assumption of non-nilpotency of $R$. Therefore, this case is impossible.

\textbf{Case 2.} Let $\alpha_1\neq0$ and $\gamma_1=0$. Using the following change of basis:

\[e_1'=\frac{1}{\alpha_1}\bigl(\sum\limits_{i=1}^k\alpha_ie_i+\beta_s f_s\bigr), \qquad e_i'=\frac{1}{\alpha_1}\sum\limits_{j=i}^k\alpha_{j-i+1}e_j, \ 2\leq i\leq k, \qquad x'=\frac{1}{\alpha_1}x.\]

We can assume that \[[x,e_1]=e_1.\]

From the  identity \[\bigl[x,[x,e_1]\bigr]=\bigl[[x,x],e_1\bigr]-\bigl[[x,e_1],x\bigr]\]
 we have that
\[e_1=\sum\limits_{i=2}^k\rho_i[e_i,e_1]-[e_1,x]=
\sum\limits_{i=3}^k\rho_{i-1}e_i + e_1 -\sum\limits_{i=2}^k\lambda_ie_i-
\sum\limits_{i=1}^s\sigma_if_i.\] Consequently, $\lambda_2=\sigma_i=0$ for $1\leq i \leq s$ and $\rho_i=\lambda_{i+1}$ for $2\leq i\leq k-1$.

From the identity \[\bigl[f_1,[x,e_1]\bigr]=\bigl[[f_1,x],e_1\bigr]-\bigl[[f_1,e_1],x\bigr]\]
we conclude that $0=\bigl[[f_1,x],e_1\bigr]=\sum\limits_{i=2}^k\tau_{i-1}e_i \Rightarrow \tau_i=0, \ 1\leq i\leq k-1$.

From the identity \[\bigl[x,[x,f_1]\bigr]=\bigl[[x,x],f_1\bigr]-\bigl[[x,f_1],x\bigr],\] we obtain
\begin{align*}
0&=\sum\limits_{i=3}^s\xi_{i-1}f_i-\sum\limits_{i=2}^s\gamma_i[f_i,x]+
\delta_k[e_k,x]=\sum\limits_{i=3}^s\xi_{i-1}f_i-
\sum\limits_{i=2}^s\gamma_i\bigl(\sum\limits_{j=i+1}^s\mu_{j-i+1}f_j\bigr)
-k\delta_ke_k\\& =\sum\limits_{i=3}^s\xi_{i-1}f_i-
\sum\limits_{i=3}^s\bigl(\sum\limits_{j=3}^i\gamma_{j-1}\mu_{i-j+2}\bigr)f_i-k\delta_ke_k
 =\sum\limits_{i=3}^s\bigl(\xi_{i-1}-\sum\limits_{j=3}^i\gamma_{j-1}\mu_{i-j+2}\bigr)f_i-
k\delta_ke_k.
\end{align*}
By comparison of coefficients at the basic elements we deduce that: \[\xi_i=\sum\limits_{j=3}^{i+1}\gamma_{j-1}\mu_{i-j+3}, \ 2\leq
i\leq s-1 \ \mbox{and} \ \delta_k=0.\]
Now we consider the following change of basis:
\[f_1'=f_1+\frac{\tau_k}{k}e_k, \quad \quad f_i'=f_i, \ 2\leq i\leq s.\]
Then we obtain
\[[f_1',x]=[f_1+\frac{\tau_k}{k}e_k,x]=\sum\limits_{i=2}^s\mu_if_i+\tau_k
e_k-\tau_ke_k=\sum\limits_{i=2}^s\mu_if_i=\sum\limits_{i=2}^s\mu_if_i'\]
and
\[[x,f_1']=[x,f_1+\frac{\tau_k}{k}e_k]=[x,f_1]=
\sum\limits_{i=2}^s\gamma_if_i=\sum\limits_{i=2}^s\gamma_if_i'.\]

Thus, we have the following table of multiplication of the algebra $R$
\[\left\{ \begin{aligned}
{} [e_i,e_1]&=e_{i+1}, && 1\leq i\leq k-1, & [f_i,f_1]&=f_{i+1}, && 1\leq i\leq s-1,\\
 [x,e_1]&=e_1,&& & [x,f_1]&=\sum\limits_{i=2}^s\gamma_if_i,&&\\
 [e_1,x]&=-e_1+\sum\limits_{i=2}^k\lambda_ie_i,&& & [f_1,x]&=\sum\limits_{i=2}^s\mu_if_i,&&\\
 [e_i,x]&=-ie_i+\sum\limits_{j=i+2}^k\lambda_{j-i+1}e_j, && 2 \leq i \leq k,& [f_i,x]&=\sum\limits_{j=i+1}^s\mu_{j-i+1}f_j, && 2 \leq i \leq s,\\
 [x,x]&=\sum\limits_{i=2}^k\rho_ie_i+\sum\limits_{i=2}^s\xi_if_i. && &  &   &&
\end{aligned} \right.\]

From the above table of multiplication the following inclusions can be immediately derived:
\[[x,NF_k]\subseteq NF_k, \quad [NF_k,x]\subseteq NF_k, \quad [x,NF_s]\subseteq NF_s, \quad [NF_s,x]\subseteq NF_s.\]
that complete the proof of the assertion established in the theorem for this case.

\textbf{Case 3.} Let $\alpha_1=0$ and $\gamma_1\neq 0$. Due to symmetry of Cases 2 and 3, the proof of the assertion of the theorem
follows similar arguments as in  Case 2.

\textbf{Case 4.} Let $\alpha_1\neq0$ and $\gamma_1\neq 0$. Consider the following change of basis:
\begin{align*}
e_1'& =\frac{1}{\alpha_1}\Bigl(\sum\limits_{i=1}^k\alpha_ie_i+\beta_s f_s\Bigr), & e_i'=\frac{1}{\alpha_1}\sum\limits_{j=i}^k\alpha_{j-i+1}e_j, && \ 2\leq i\leq k, & \\
f_1'& =\frac{1}{\gamma_1}\Bigl(\sum\limits_{i=1}^s\gamma_if_i+\delta_k e_k\Bigr), & f_i'=\frac{1}{\gamma_1}\sum\limits_{j=i}^k\gamma_{j-i+1}f_j, && \ 2\leq i\leq s, & \quad x'=\frac{1}{\alpha_1}x.
\end{align*}
Then we derive
\begin{align*}
[x',e_1']&=\bigl[\frac{1}{\alpha_1}x,\frac{1}{\alpha_1}\bigl(\sum\limits_{i=1}^k\alpha_ie_i+\beta_s
f_s\bigr)\bigr]=\frac{1}{\alpha_1^2}\alpha_1[x,e_1]=\frac{1}{\alpha_1}[x,e_1]=e_1', \\
[x',f_1']&=\bigl[\frac{1}{\alpha_1}x,\frac{1}{\gamma_1}\bigl(\sum\limits_{i=1}^s\gamma_if_i+\delta_k e_k\bigr)\bigr]=\frac{1}{\alpha_1\gamma_1}\gamma_1[x,f_1]=\frac{\gamma_1}{\alpha_1}f_1'.
\end{align*}
From the identity $\bigl[x,[x,e_1]\bigr] = \bigl[[x,x],e_1\bigr] - \bigl[[x,e_1],x\bigr]$ we deduce:
\[e_1=\sum\limits_{i=2}^k\rho_i[e_i,e_1]-[e_1,x]=
\sum\limits_{i=3}^k\rho_{i-1}e_i+ \alpha_1 e_1 -\sum\limits_{i=2}^k\lambda_ie_i-
\sum\limits_{i=1}^s\sigma_if_i.\]
Therefore, $\alpha_1 = 1, \lambda_1=-1, \ \lambda_2=\sigma_i=0,  \ 1\leq i \leq s$ and $\rho_i=\lambda_{i+1}, \ 2\leq i\leq k-1$.

Expanding the identity $\bigl[x,[x,f_1]\bigr]=\bigl[[x,x],f_1\bigr]-\bigl[[x,f_1],x\bigr]$, we derive the equalities:
\[\Bigl(\frac{\gamma_1}{\alpha_1}\Bigr)^2f_1=
\sum\limits_{i=2}^s\xi_i[f_i,f_1]-\frac{\gamma_1}{\alpha_1}[f_1,x]=
\sum\limits_{i=3}^s\xi_{i-1}f_i-\frac{\gamma_1}{\alpha_1}\sum\limits_{i=1}^s\mu_if_i-
\frac{\gamma_1}{\alpha_1}\sum\limits_{i=1}^k\tau_ie_i \]
from which we have $\mu_1=-\frac{\gamma_1}{\alpha_1}, \ \mu_2=\tau_i=0, \ 1 \leq
i \leq k$ and $\xi_i= \frac{\gamma_1}{\alpha_1} \mu_{i+1}, \ 2\leq i\leq s-1$.

Finally, we obtain the following products of basis elements in the algebra $R$:
\[\left\{\begin{aligned}
{} [e_i,e_1]& =e_{i+1}, && 1\leq i\leq k-1, & [f_i,f_1]& =f_{i+1}, && 1\leq i\leq s-1,\\
 [x,e_1]&=e_1,&& & [x,f_1]&=\frac{\gamma_1}{\alpha_1}f_1,&&\\
[e_1,x]&=-e_1+\sum\limits_{i=3}^k\lambda_ie_i,&& & [f_1,x]&=-\frac{\gamma_1}{\alpha_1}f_1+\sum\limits_{i=3}^s\mu_if_i,&&\\
 [x,x]&=\sum\limits_{i=2}^k\rho_ie_i+\sum\limits_{i=2}^s\xi_if_i. && &  &  &&
\end{aligned} \right.\]
These products are sufficient in order to check the inclusions:
\[[x,NF_k]\subseteq NF_k, \  [NF_k,x]\subseteq NF_k, \ [x,NF_s]\subseteq NF_s, \ [NF_s,x]\subseteq NF_s.\]
Thus, the ideals $NF_k$ and $\ NF_s$ of the nilradical are also ideals of the algebra.
\end{proof}

Now we are going to study  solvable Leibniz algebras with nilradical $NF_k\oplus NF_s$ and with one-dimensional complementary vector  space. Due to Theorem~\ref{thm36} we can assume that $NF_k$ and $NF_s$ are ideals of the algebra.

\begin{thm}  \label{thm37} Let $R$ be a solvable Leibniz algebra such that $R=NF_k\oplus NF_s+Q$, where $NF_k\oplus NF_s$ is the nilradical of $R$ and $\dim Q=1$. Let us assume that $\{ e_1, e_2, \dots, e_k \}$ is a basis of $NF_k$, $\{ f_1, f_2, \dots, f_s \}$ is a basis of $NF_s$ and $\{ x \}$ is a basis of $Q$. Then the algebra $R$ is isomorphic to one of the following pairwise non-isomorphic algebras:
\[R(\alpha):\left\{ \begin{aligned}
{}[e_i,e_1]&=e_{i+1}, && 1\leq i\leq k-1, & [f_i,f_1]&=f_{i+1}, && 1\leq i\leq s-1,\\
[x,e_1]&=e_1,&& & [x,f_1]&=\alpha f_1, && \alpha\neq0 \\
[e_i,x]&=-ie_i, && 1\leq i \leq k,& [f_i,x]&=-i\alpha f_i, && 1 \leq i \leq s.
\end{aligned} \right.\]
\[R(\beta_2, \beta_3, \dots , \beta_s, \gamma):\left\{ \begin{aligned}
{}[e_i,e_1]&=e_{i+1}, && 1\leq i\leq k-1, & [f_i,f_1]&=f_{i+1}, && 1\leq i\leq s-1,\\
[x,e_1]&=e_1,&&  &[f_i,x]&=\sum\limits_{j=i+1}^s\beta_{j-i+1}f_j, && 1 \leq i \leq s,\\
[e_i,x]&=-ie_i, && 1 \leq i \leq k,& [x,x]&=\gamma f_s.&&
\end{aligned} \right.\]
in the second family of algebras the first non-zero element of the set $(\beta_2, \beta_3, \dots , \beta_s, \gamma)$ can be assumed equal to 1.
\end{thm}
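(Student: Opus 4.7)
The plan is to start from the normal forms furnished by Theorem~\ref{thm36}. That theorem left us with Cases~2,~3, and~4 as the only non-nilpotent solvable possibilities (Case~1 being already excluded), and in each case produced an explicit multiplication table in which the only remaining freedom resides in a handful of scalar parameters together with the distinguished scalars $\alpha_1$ and $\gamma_1$. My strategy is to exhaust these parameters by a sequence of changes of basis that respects the nilradical decomposition $NF_k\oplus NF_s$, and then to argue pairwise non-isomorphism of the surviving forms.

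In Case~4 ($\alpha_1\neq 0,\ \gamma_1\neq 0$), after rescaling we may take $\alpha_1=1$, so $\gamma_1=:\alpha\neq 0$. On each null-filiform side the situation is a replica of Case~1 of Theorem~\ref{thm35}: the operator $R_x$ is semisimple with eigenvalues $-1,-2,\dots,-k$ on $NF_k$ and $-\alpha,-2\alpha,\dots,-s\alpha$ on $NF_s$, up to a strictly upper-triangular perturbation. Applying on each side the triangular change
\[e_i'=e_i+\sum_{j=i+2}^{k} A_{j-i+1}e_j,\qquad f_i'=f_i+\sum_{j=i+2}^{s} B_{j-i+1}f_j,\]
with $A_j,B_j$ defined by the same recursions as in Theorem~\ref{thm35}, kills all the off-diagonal terms in $[e_1,x]$ and $[f_1,x]$; a simultaneous shift of $x$ by a linear combination of $e_2,\dots,e_k,f_2,\dots,f_s$ eliminates $[x,x]$ (the coefficients can be solved for because $-i\neq 0$ and $-i\alpha\neq 0$). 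This yields precisely the family $R(\alpha)$.

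Case~2 ($\alpha_1\neq 0,\ \gamma_1=0$, with Case~3 reducing to it by the $NF_k\leftrightarrow NF_s$ relabeling permitted by the theorem statement) is more delicate. The $NF_k$ side is normalized exactly as in Case~4, giving $[e_i,x]=-ie_i$. On the $NF_s$ side $R_x$ is nilpotent, so no diagonalization is available. Instead, a substitution $x\mapsto x+\sum c_i f_i$ with $c_i=-\gamma_{i+1}$ kills $[x,f_1]$, and then the induction of equations~\eqref{E:3.1} adapted to this setting forces $[f_i,x]=\sum_{j=i+1}^{s}\beta_{j-i+1}f_j$, with the tuple $(\beta_2,\dots,\beta_s)$ inherited from $[f_1,x]$. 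A second shift $x\mapsto x+\sum c_i' e_i+\sum d_i f_i$ absorbs every component of $[x,x]$ except the $f_s$-component, leaving $[x,x]=\gamma f_s$. The residual freedom $f_1\mapsto \eta f_1$, which propagates to $f_i\mapsto \eta^i f_i$, together with a rescaling of $x$, rescales $(\beta_2,\dots,\beta_s,\gamma)$ and allows normalization of the first nonzero entry to $1$.

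For non-isomorphism, the first family is distinguished from the second by whether $R_x|_{NF_s}$ is semisimple or nilpotent, a clear invariant. Inside $R(\alpha)$ the parameter $\alpha$ is the ratio of the two nonzero eigenvalues of $R_x$ on the generators of the null-filiform ideals, hence an isomorphism invariant (possibly modulo $\alpha\leftrightarrow 1/\alpha$ when $k=s$, which must be checked separately). Inside the second family the Jordan type of $R_x|_{NF_s}$ together with the class of $[x,x]$ modulo $[N,N]$ recover the orbit of $(\beta_2,\dots,\beta_s,\gamma)$ under the one-parameter rescaling. I expect the main obstacle to be the Case~2 bookkeeping: verifying that the Leibniz identities really do collapse the large collection of parameters appearing after Theorem~\ref{thm36} (the $\lambda_i,\gamma_i,\mu_i,\rho_i,\xi_i,\tau_i$) to the single string $(\beta_2,\dots,\beta_s,\gamma)$ without creating hidden constraints, amplifying the bookkeeping already carried out in Theorem~\ref{thm35}.
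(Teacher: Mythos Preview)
Your approach is correct in substance but takes a more laborious route than the paper. You work case-by-case from the explicit tables produced in the proof of Theorem~\ref{thm36}, normalising each null-filiform block separately by repeating the triangular change of Theorem~\ref{thm35}. The paper instead observes that since $NF_k$ and $NF_s$ are ideals of $R$ (Theorem~\ref{thm36}), one may pass to the quotient $R/NF_s\cong \overline{NF_k}+\overline Q$ and invoke Theorem~\ref{thm35} \emph{once}, obtaining the normalised $NF_k$-side products~\eqref{E:3.4} for free; only the $NF_s$-side products $[x,f_1]$, $[f_1,x]$, $[x,x]$ then remain to be analysed. This bypasses the re-derivation you anticipate as ``the main obstacle'' and makes the $e$-components of $[x,x]$ vanish automatically rather than via a further shift.

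Two minor slips in your write-up are worth fixing. First, in the isomorphism analysis of the second family you cannot rescale $x$: the relations $[x,e_1]=e_1$ and $[e_i,x]=-ie_i$ pin $x$ down, so the only residual freedom is the change $f_i'=A_1^{i-1}\sum_j A_{j-i+1}f_j$ with $x'=x$, under which $\beta_i'=\beta_i/A_1^{i-1}$ and $\gamma'=\gamma/A_1^{s}$. Second, your parenthetical remark about the possible identification $R(\alpha)\cong R(1/\alpha)$ when $k=s$ is well taken and in fact more careful than the paper, which asserts pairwise non-isomorphism without addressing this symmetry; you should indeed resolve it (it is genuine when $k=s$, coming from swapping the two blocks and rescaling $x$ by $1/\alpha$).
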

\begin{proof}  Firstly, we note that the algebras $NF_k+Q$ and $NF_s+Q$ are not simultaneously nilpotent. Indeed, if they  are both nilpotent, then  we have:
\[\begin{aligned}
{} [e_i,e_1] & \in  \langle \{e_{i+1},\dots, e_k \} \rangle, && 1\leq i\leq k-1, &[f_i,f_1] &\in \langle \{f_{i+1}, \dots, f_s \} \rangle, &&  1\leq i\leq s-1,\\
[x,e_1] & \in \langle \{ e_2, e_3, \dots, e_k \} \rangle, &&  &{}[x,f_1] &\in \langle \{f_2, f_3, \dots, f_s \} \rangle, &&\\
 [e_i,x]& \in \langle \{ e_{i+1}, \dots, e_k \} \rangle, &&  1 \leq i \leq k-1, &[f_j,x]& \in \langle \{f_{j+1}, \dots, f_s \} \rangle, && 2 \leq i \leq s-1,
\end{aligned}\]

From the equalities $0=\bigl[e_1,[x,x]\bigr], \ 0=\bigl[f_1,[x,x]\bigr]$ we conclude that:
\[\, [x,x]\in \langle \{e_2, e_3, \dots, e_k, f_2, f_3, \dots, f_s \} \rangle.\]
Therefore, $R^2\subseteq \{e_2, e_3, \dots, e_k, f_2, f_3, \dots, f_s\}$. Moreover, we have $R^i\subseteq \{e_i, e_{i+1}, \dots, e_k, f_i, f_{i+1}, \dots, f_s\}$, which implies that $R^{\max\{{k,s}\}+1}=\{0\}$, i.e. we get a contradiction with the assumption of non-nilpotency of the algebra $R$.
Hence, the algebras $NF_k+Q$ and $NF_s+Q$ cannot be simultaneously nilpotent.

Without loss of generality, we can assume that algebra $NF_k+Q$ is non-nilpotent.

We take the quotient algebra by ideal $NF_s$, then $R/NF_s\cong
\overline{NF_k}+\overline{Q}$. Thanks to Theorem~\ref{thm35} the structure of algebra
$\overline{NF_k}+\overline{Q}$ is known. Namely,
\begin{equation} \label{E:3.4}
\left\{\begin{aligned}
{} [\overline{e_i},\overline{e_1}]& =\overline{e_{i+1}}, && 1\leq i\leq k-1, \\
 [\overline{x},\overline{e_1}]&=\overline{e_1}, && \\
  [\overline{e_i},\overline{x}]&=-i\overline{e_i}, && 1\leq i\leq k.
\end{aligned} \right.
\end{equation}
Using the fact that $NF_k$ and $\ NF_s$ are ideals of $R$ and having in mind the table of multiplication \eqref{E:3.4}, we have that:
\begin{equation} \label{E:3.5}
\left\{ \begin{aligned}
{} [e_i,e_1]&=e_{i+1}, && \ 1\leq i\leq k-1,  \qquad &[f_i,f_1]&=f_{i+1}, && \ 1\leq i\leq s-1,\\
[x,e_1]& =e_1,&&   \qquad &[x,f_1]& =\sum\limits_{i=1}^s\alpha_if_i,&&\\
 [e_i,x]&=-ie_i, && \ 1\leq i\leq k, \qquad  &[f_1,x]&=\sum\limits_{i=1}^s\beta_if_i,&&\\
 &  &&  &[x,x]&=\sum\limits_{i=1}^s\gamma_if_i.   &&
\end{aligned} \right.
\end{equation}
If $\alpha_1\neq 0$, then in a similar way  as  the Case 1 of Theorem~\ref{thm35} we obtain the family of algebras $R(\alpha)$, where $\alpha\neq0$.

The non-isomorphy of two algebras in the family $R(\alpha)$ with different values of parameter $\alpha$ easily can be determined by a general change of basis and considering the expansion of the product
$[x',f_1']$ in both bases.

Now consider $\alpha_1=0$. Then by the change of basis
\[x'=x - (\alpha_2f_1 + \alpha_3f_2 + \dots +
\alpha_sf_{s-1}),\] we can suppose  $[x, f_1]=0$.

From the identity  $\bigl[f_1,[f_1,x]\bigr] = \bigl[[f_1,f_1],x\bigr]-\bigl[[f_1,x],f_1\bigr]$ we get $\beta_1=0$.

Similarly to the proof of equations \eqref{E:3.1}, we can  prove that
$[f_i,x]=\sum\limits_{m=i+1}^s\beta_{m-i+1}f_j, \ 1\leq i\leq s$.

The identity $\bigl[x,[f_1,x]\bigr]=\bigl[[x,f_1],x\bigr]-\bigl[[x,x],f_1\bigr]$ implies the following chain of equalities:
\[0=-\bigl[[x,x],f_1\bigr]=-\sum\limits_{m=3}^s\gamma_{m-1}f_m \, .\]
Consequently, $\gamma_i=0, \ 2\leq i\leq s-1$.

Thus, we obtain the products of the family $R(\beta_2, \beta_3, \dots , \beta_s, \gamma)$
\[\left\{\begin{aligned}
{}[f_i,f_1]&=f_{i+1},  && 1\leq i\leq s-1,\\
[f_i,x]& =\sum\limits_{m=i+1}^s\beta_{m-i+1}f_m, && 1\leq i\leq s,\\
\,[x,x]&=\gamma_s f_s. &&
\end{aligned} \right.\]
Now we are going to study the isomorphism inside the family $R(\beta_2, \beta_3, \dots , \beta_s, \gamma)$.

Taking into account that, under general basis transformation, the products \eqref{E:3.5} should not be changed, we conclude that it is sufficient to take the following change of basis:
\[f_i'=A_1^{i-1}\sum\limits_{j=i}^sA_{j-i+1}f_j, \ (A_1\neq0) \ 1\leq i\leq s, \quad x'=x.\]
Then we have
\[[f_1',x']= \displaystyle \sum_{i=1}^s A_i[f_i,x]= \displaystyle \sum_{i=1}^{s-1} A_i \Bigl( \displaystyle \sum_{j=i+1}^s \beta_{j-i+1} f_j \Bigr)= \displaystyle \sum_{i=2}^s \Bigl( \sum_{j=1}^{i-1} A_j B_{i-j+1} \Bigr) f_i\]
On the other hand
\[[f_1',x']= \displaystyle \sum_{i=2}^s \beta_i'f_i'= \displaystyle \sum_{i=1}^{s-1} A_1^i \beta'_{i+1} \Bigl( \displaystyle \sum_{j=1}^{s-i} A_j f_{i+j} \Bigr) = \displaystyle \sum_{i=2}^s \Bigl( \displaystyle \sum_{j=1}^{i-1} A_1^j A_{i-j} \beta'_{j+1} \Bigr) f_i\]
Comparing coefficients at the basic elements we deduce that:
\[\sum_{i=1}^{k-1} A_i \beta_{k-i+1} = \sum_{i=1}^{k-1} A_1^i A_{k-i} \beta'_{i+1}, \quad k = 2, 3, \dots, s\]
From these systems of equations easily it follows:
\[\beta_i'=\frac{\beta_i}{A_1^{i-1}}, \quad  \ 2\leq i\leq s.\]
If we consider
\[\gamma_s' A_1^sf_s=\gamma_s' f_s'=[x',x']=[x,x]=\gamma_s f_s\]
then we obtain that:
\[\ \gamma_s'=\frac{\gamma_s}{A_1^s}.\]
It is easy to see that by choosing an adequate value for the parameter $A_1$, then the first non-zero element of the set $(\beta_2, \beta_3, \dots, \beta_s, \gamma)$ can be assumed equal to 1.

Therefore, two algebras $R(\beta_2, \beta_3, \dots , \beta_s, \gamma)$ and $R(\beta_2', \beta_3', \dots , \beta_s', \gamma')$ with different set of parameters are not isomorphic.

For a given parameters $\alpha$ and $\beta_2, \beta_3, \dots , \beta_s, \gamma$, the algebras $R(\alpha)$ and $R(\beta_2, \beta_3, \dots , \beta_s, \gamma)$ are not isomorphic because \[k+s= \dim R(\alpha)^2\neq \dim R(\beta_2, \beta_3, \dots , \beta_s, \gamma)^2=k+s-1.\]
\end{proof}
\begin{rem} In the case when all coefficients $(\beta_2, \beta_3, \dots , \beta_s, \gamma)$ are equal to zero we have the split algebra $(NF_k+Q) \oplus NF_s$. Therefore, in non-split case, we can always assume that $(\beta_2, \beta_3, \dots, \beta_s, \gamma) \neq (0, 0, 0, \dots, 0)$.
\end{rem}

 Now, by  an induction process, we are going to  generalize Theorem~\ref{thm37} to the case when the  nilradical is a direct sum (greater than 2) of several copies of null-filiform ideals.

\begin{thm} Let $R$ be a solvable Leibniz algebra such that $R=NF_{n_1}\oplus NF_{n_2}\oplus \dots \oplus NF_{n_s}+Q$, where $NF_{n_1}\oplus NF_{n_2}\oplus \dots\oplus NF_{n_s}$ is the nilradical of $R$ and $\dim  Q=1$. Let us assume that $\{e_1^i, e_2^i, \dots, e_{n_i}^i \}$ is a basis of $NF_{n_i}, 1 \leq i \leq j'$, and $\{f_1^k, f_2^k, \dots, f_{n_k}^k \}$ is a basis of $NF_{n_{j'+k}}, 1 \leq k \leq k'$, and $\{ x \}$ is a basis of $Q$. Then the algebra $R$ is isomorphic to one of the following pairwise non-isomorphic algebras:
\begin{equation} \label{E:3.6}
R_{j',k'}:\left\{\begin{aligned}
{} [e_i^j,e_1^j]&=e_{i+1}^j, &&  1\leq i\leq n_j-1, & [f_i^k,f_1^k]&=f_{i+1}^k, && 1\leq i\leq n_k-1,\\
 [x,e_1^j]&=\delta^je_1^j, && \delta^j\neq0
& [f_i^k,x]&=\sum\limits_{m=i+1}^{n_k}\beta_{m-i+1}^kf_m^k, && 1\leq i\leq n_k,\\
 [e_i^j,x]&=-i\delta^je_i^j, && 1\leq i\leq n_j, & [x,x]&=\sum\limits_{m=1}^k\gamma^mf_{n_m}, &&
\end{aligned} \right.
\end{equation}
where $j'+k'=s$, $1 \leq j \leq j', 1 \leq k \leq k'$, $j' \neq 0$ and
$\delta^1 =1$.
Moreover, the first non-zero elements among set $(\beta_2^k, \beta_3^k, \dots , \beta_{n_k}^k, \gamma^k)$  can be assumed  equal to 1.
\end{thm}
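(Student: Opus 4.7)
The plan is to prove the theorem by induction on the number $s$ of null-filiform summands, with $s=2$ (Theorem~\ref{thm37}) as the base case. The inductive step proceeds in three stages: first, show that each summand $NF_{n_i}$ is a two-sided ideal of $R$, extending Theorem~\ref{thm36}; then, determine the shape of all products involving $x$ by passing to appropriate quotients; finally, normalize the remaining parameters by changes of basis and establish non-isomorphy.

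For stage one I would mimic the argument of Theorem~\ref{thm36} applied to each pair of summands $(NF_{n_i}, NF_{n_j})$ with $i\neq j$. Writing $[x,e_1^i]$ in its most general form and expanding
\[ 0 = \bigl[x,[e_1^i,e_1^j]\bigr] = \bigl[[x,e_1^i],e_1^j\bigr] - \bigl[[x,e_1^j],e_1^i\bigr] \]
forces every cross-summand coefficient to vanish, except possibly a top-element correction which can be absorbed by adjusting $e_1^i$ (and $e_1^j$) exactly as in Cases~2--4 of Theorem~\ref{thm36}. Once every $NF_{n_i}$ is an ideal of $R$, fix $i$ and consider the quotient $R/\bigoplus_{m\neq i} NF_{n_m}$; this is a solvable Leibniz algebra with null-filiform nilradical and one-dimensional complement. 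By Theorem~\ref{thm35} and Case~2 of its proof, there are only two possibilities: either the quotient is non-nilpotent with $[x,e_1^i]=\delta^i e_1^i$, $\delta^i\neq 0$ and $[e_j^i,x]=-j\delta^i e_j^i$, or it is nilpotent with $[x,e_1^i]=0$ and $[e_j^i,x]=\sum_{m=j+1}^{n_i}\beta_{m-j+1}^i e_m^i$. Relabeling, the first kind become the $e^j$-blocks and the second the $f^k$-blocks of \eqref{E:3.6}. The bound $j'\neq 0$ follows by the argument opening the proof of Theorem~\ref{thm37}: if every summand were of nilpotent type, the whole $R$ would be nilpotent.

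For stage three, $[x,x]$ lies in the nilradical and a priori has components in every summand. The substitution $x \mapsto x + \sum_{j=1}^{j'}\sum_{i=1}^{n_j}\frac{\rho_i^j}{i\delta^j}\,e_i^j$ eliminates every $e^j$-component of $[x,x]$, the availability of $\delta^j\neq 0$ being crucial. Within each $f^k$-block, the identity $\bigl[x,[x,x]\bigr]=0$ together with a further adjustment of $x$ by lower $f_i^k$ reduces the $NF_{n_{j'+k}}$-contribution of $[x,x]$ to the single top term $\gamma^k f_{n_k}^k$, exactly as in the $\alpha_1=0$ case of Theorem~\ref{thm37}. Rescaling $x$ normalizes $\delta^1 = 1$; rescaling each $f_1^k$ normalizes the first nonzero entry of $(\beta_2^k,\dots,\beta_{n_k}^k,\gamma^k)$ to $1$, following the closing computation of Theorem~\ref{thm37}. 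Non-isomorphy across different parameter tuples will then be read off from $\dim R^2$, the spectrum of the right multiplication operator $R_x$ on the nilradical (which encodes $\{\delta^j\}$), and the block-wise invariants $(\beta_2^k,\dots,\gamma^k)$. The principal obstacle is stage one: a priori $[x,e_1^i]$ may mix contributions from every other $NF_{n_j}$, and the successive changes of basis used to eliminate these cross-terms must be shown to remain compatible with all the Leibniz identities linking different summands simultaneously — the bookkeeping across all pairs at once is the main technical burden.
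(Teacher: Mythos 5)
Your proposal is correct in outline and, like the paper, proceeds by induction on $s$ with Theorems~\ref{thm35}--\ref{thm37} as the engine, but the inductive step is organized differently. The paper quotients by the \emph{single new} summand $NF_{n_{s+1}}$, invokes the inductive hypothesis to get the full table \eqref{E:3.6} for the first $s$ blocks at once, and then only has to analyze the products linking $x$ with the new summand, splitting on whether $\alpha_1^{s+1}$ vanishes (the two cases producing $R_{j'+1,k'}$ and $R_{j',k'+1}$ respectively). You instead first establish that \emph{every} summand is a two-sided ideal of $R$ by running the Theorem~\ref{thm36} argument over all pairs $(NF_{n_i},NF_{n_j})$, and then quotient by the complement $\bigoplus_{m\neq i}NF_{n_m}$ of each single block to classify it via Theorem~\ref{thm35}. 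Both routes reach the same case dichotomy and the same normalizations; what the paper's choice of quotient buys is precisely the avoidance of the ``bookkeeping across all pairs at once'' that you correctly identify as the main burden of your stage one --- the inductive hypothesis already packages the mutual compatibility of the first $s$ blocks, so only one new block's cross-terms ever need to be killed at a time. Conversely, your version makes explicit a fact the paper leaves implicit (that each $NF_{n_i}$ is an ideal of $R$ itself, generalizing Theorem~\ref{thm36} to $s>2$). One small caution: your substitution $x\mapsto x+\sum_{j,i}\frac{\rho_i^j}{i\delta^j}e_i^j$ to clean up $[x,x]$ must be accompanied by the corresponding corrections $e_i^j\mapsto e_i^j+\cdots$ (as in the simultaneous change of basis in the proof of Theorem~\ref{thm35}), since otherwise the already-normalized products $[x,e_1^j]=\delta^je_1^j$ are disturbed; this is routine but should be stated.
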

\begin{proof} By induction on $s$:

If $s=1$, then $j'=1, \ k'=0$, so $R_{1, 0}$ is the  algebra given in  Theorem~\ref{thm35}.

If $s=2$, then we have two cases: either $j'=2, \ k'=0$ or $j'=1, \ k'=1$, which were considered in Theorem~\ref{thm36}. Namely, we have two families of algebras:  $R(\alpha)$, which corresponds to $R_{2, 0}$, and $R(\beta_2, \beta_3, \dots, \beta_s, \gamma)$, which corresponds to $R_{1,1}$.

Let us assume that the theorem is true for $s$ and we shall prove it for $s+1$.

Let $R=NF_{n_1}\oplus NF_{n_2}\oplus \dots \oplus NF_{n_s}\oplus
NF_{n_{s+1}}+Q$. We consider the quotient algebra by $NF_{n_{s+1}}$, i.e. $R/NF_{n_{s+1}}\cong \overline{NF_{n_1}}\oplus
\overline{NF_{n_2}}\oplus \dots \oplus\overline{
N_{nF_s}}+\overline{Q}$. Then we get the table of multiplication given in \eqref{E:3.6}.

Note that the table of multiplication for the algebra $R$ can be obtained from \eqref{E:3.6} by adding the products
\begin{align*}
 [e_i^{s+1},e_1^{s+1}]& = e_{i+1}^{s+1}, && 1\leq i\leq n_{s+1}-1, \\
 [x,e_1^{s+1}] & = \sum\limits_{m=1}^{n_{s+1}}\alpha_m^{s+1}e_m^{s+1}, && \\
 [e_1^{s+1},x] & = \sum\limits_{m=1}^{n_{s+1}}\beta_m^{s+1}e_m^{s+1}, &&\\
 [x,x] & =\sum\limits_{m=1}^{n_{s+1}}\gamma_m^{s+1}e_m^{s+1}. &&
\end{align*}
If $\alpha_1^{s+1}\neq0$, then in an analogous way as  in proof of Theorem~\ref{thm35} we derive
\begin{align*}
 [e_i^{s+1},e_1^{s+1}] & =e_{i+1}^{s+1}, && 1\leq i\leq n_{s+1}-1, \\
 [x,e_1^{s+1}]& =\alpha_{s+1}^{s+1}e_1^{s+1}, && \\
 [e_i^{s+1},x] & =-i\alpha^{s+1}e_i^{s+1}, && 1\leq i\leq n_{s+1}.
\end{align*}
Therefore we get the algebra $R_{j'+1,k'}$.

If $\alpha_1^{s+1}=0$, then by similar arguments  as in Theorem~\ref{thm37} we obtain
\begin{align*}
 [e_i^{s+1},e_1^{s+1}] & =e_{i+1}^{s+1}, && 1\leq i\leq n_{s+1}-1, \\
 [e_i^{s+1},x] & =\sum\limits_{m=i+1}^{n_{s+1}}\beta_{m-i+1}^{s+1}f_m^{s+1}, &&  1\leq i\leq n_{s+1},\\
 [x,x] & = \sum\limits_{m=1}^k\gamma^mf_{n_m} + \gamma^{s+1}f_{n_{s+1}}^{s+1}.  &&
\end{align*}
Setting $f_{i-1}^{k'+1} = e_{i-1}^{s+1}$ we get the family of algebras
$R_{j',k'+1}$.

Non-isomorphy of two algebras of the family $R_{j',k'}$ with different values of parameters is carrying out in a similar way as in the proof of Theorem~\ref{thm37}.
\end{proof}

In fact, due to Theorem~\ref{t31} the complementary vector space in case when the  nilradical of a solvable Leibniz algebra is a direct sum of $s$ copies of null-filiform ideals has dimension not grater than $s$. By taking direct sum of ideals $NF_i+Q_i$ and $NF_k\oplus \dots \oplus NF_s$, where $1 \leq i \leq k-1, \ k\leq s$, we can construct a solvable Leibniz algebra whose nilradical is $NF_1\oplus\dots\oplus NF_s$ and whose complementary vector space is $k$-dimensional for each $k$ ($k\leq s$).

\section*{Acknowledgements}

First and second  authors were supported by Ministerio de
Ciencia e Innovaci\'on, Grant MTM2009-14464-C02 (European FEDER
support included) and by Xunta de Galicia, Grant Incite09 207 215 PR.


\end{document}